\newcommand{\tD}{\widetilde{D}}
\title{A note on Hamilton decompositions of even-regular multigraphs}
\author[Pfenninger]{Vincent Pfenninger}
\email{pfenninger@math.tugraz.at}
\address{Institute of Discrete Mathematics, Graz University of Technology}
\thanks{ 
This project has received partial funding from the European Research
Council (ERC) under the European Union's Horizon 2020 research and innovation programme (grant agreement no.\ 786198).
}
\begin{document}

\begin{abstract}
    In this note, we prove that every even regular multigraph on $n$ vertices with multiplicity at most $r$ and minimum degree at least $rn/2 + o(n)$ has a Hamilton decomposition. This generalises a result of Vaughan who proved an asymptotic version of the multigraph $1$-factorisation conjecture. We derive our result by proving a more general result which states that dense regular multidigraphs that are robust outexpanders have a Hamilton decomposition. This in turn is derived from the corresponding result of K\"uhn and Osthus about simple digraphs.
\end{abstract}

\maketitle

\section{Introduction}
A \emph{multidigraph} (or \emph{directed multigraph}) $D$ is a pair $(V(D), E(D))$ where $V(D)$ is a finite set and~$E(D)$ is a multiset with elements from the set $(V(D)\times V(D))\setminus \{(v,v)\colon v \in V(D)\}$ (so loops are not allowed).
We call $V(D)$ the \emph{set of vertices of $D$} and $E(D)$ the \emph{multiset of edges of $D$}. 
The \emph{multiplicity of a multidigraph $D$} is the maximum number of times an edge appears in $E(D)$.
For a vertex $v \in V(D)$, we denote by $d^+_D(v)$ the number of \emph{outedges} at $v$ in $D$, that is, edges of the form $(v, x)$ for some $x \in V(D)$ (counting multiplicities). Similarly, $d^-_D(v)$ is the number of \emph{inedges} at $v$ in $D$. We say that $D$ is $s$-regular, if $d^+_D(v) = d^-_D(v) = s$ for every $v \in V(D)$.
A \emph{Hamilton cycle} in a multidigraph is a directed cycle covering all the vertices. A \emph{Hamilton decomposition} of a multidigraph $D$ is a partition of $E(D)$ such that every part is the edge set of a Hamilton cycle of~$D$. We also use analogous definitions for multigraphs (the undirected analogue to multidigraphs).

Vaughan proved that if $n$ is even and~$G$ is a regular multigraph on $n$ vertices with multiplicity at most $r$ and minimum degree at least $rn/2 + o(n)$, then~$G$ has a $1$-factorisation\footnote{A \emph{$1$-factorisation} is a decomposition of the edge set into perfect matchings.} \cite{Vaughan2013}. This is an approximate version of the multigraph $1$-factorisation conjecture of Plantholt and Tipnis \cite{Plantholt2001} which is a generalisation of the $1$-factorisation conjecture \cite{Chetwynd1985}. The $1$-factorisation conjecture states that if~$G$ is an $s$-regular graph on $n$ vertices where $n$ is an even integer and $s \geq n/2$, then~$G$ has a $1$-factorisation. The multigraph $1$-factorisation conjecture states that if~$G$ is an $s$-regular multigraph on $n$ vertices with multiplicity at most $r$ where $n$ is an even integer and $s \geq rn/2$, then~$G$ has a $1$-factorisation. The $1$-factorisation conjecture was proved for all large graphs by Csaba, K\"uhn, Lo, Osthus, and Treglown \cite{Csaba2016}. Our aim in this note is to generalise the result of Vaughan by showing that if in addition the multigraph is even-regular, then it has a Hamilton decomposition.

\begin{theorem} \label{HCD_thm}
    For every $\eps \in (0,1)$ and $r \in \mathbb{N}$, there exists $n_0 \in \mathbb{N}$ such that the following holds for all $n \geq n_0$. Let~$G$ be an $s$-regular $n$-vertex multigraph with multiplicity at most $r$ where $s \in 2\mathbb{N}$ with $s \geq rn/2 + \eps n$. Then~$G$ has a Hamilton decomposition. 
\end{theorem}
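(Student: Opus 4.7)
The plan is first to pass to the directed setting. Since $s \in 2\mathbb{N}$, every vertex of $G$ has even degree, so $G$ is Eulerian; fixing an Eulerian orientation yields an $s/2$-regular multidigraph $D$ on $V(G)$ with multiplicity at most $r$. A Hamilton decomposition of $D$ translates to one of $G$ by forgetting orientations, so it suffices to produce one for $D$.

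Next I would verify that $D$ is a robust $\nu$-outexpander (in the appropriate multidigraph sense) for some $\nu = \nu(\eps,r)>0$ and $\tau = \tau(\eps,r) > 0$. For any $S \subseteq V(D)$ with $\tau n \le |S| \le (1-\tau)n$, a counting argument shows that the total multiplicity of edges from $S$ to $V(D) \setminus S$ is at least a constant fraction of $n^2$, while each vertex in $V(D) \setminus S$ can receive at most $r|S|$ of them, so a linearly large subset of $V(D) \setminus S$ must each receive at least $\nu n$ edges from $S$. Given this, the plan is to invoke an intermediate theorem: every sufficiently dense regular multidigraph of bounded multiplicity that is a robust outexpander has a Hamilton decomposition.

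The substance of the work, and the main obstacle, is the proof of the intermediate theorem, to be derived from the K\"uhn--Osthus Hamilton decomposition theorem for simple digraphs. A direct application of K\"uhn--Osthus to the underlying simple digraph of $D$ does not work: its minimum semi-degree could drop to roughly $n/(2r)$, below the threshold at which robust outexpansion yields a Hamilton decomposition. Likewise one cannot simply peel off a bounded ``cleaning'' collection of Hamilton cycles to remove all multi-edges in advance, since the total excess multiplicity at a single vertex can itself be a constant fraction of $s/2$. Instead I would follow the absorption paradigm of K\"uhn--Osthus: set aside a sparse pseudorandom absorber $A \subseteq D$ such that any almost-Hamiltonian system of paths in $D \setminus A$ using a prescribed number of copies of each multi-edge can be completed through $A$ into a Hamilton cycle; then produce an approximate decomposition of $D \setminus A$ via a multidigraph version of the K\"uhn--Osthus approximate-decomposition lemma, balancing the uses of multi-edges across the cycles; and finally absorb the residues into Hamilton cycles. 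The delicate point is the design and analysis of $A$: it must absorb arbitrary (but controlled) multiplicity profiles, have small maximum degree so its removal barely perturbs the regularity and expansion of $D$, and respect the multiplicity cap $r$ throughout.
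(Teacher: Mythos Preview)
Your reduction to the directed setting via an Eulerian orientation has a real gap. The counting argument you sketch shows only that a linearly large subset of $V(D)\setminus S$ receives at least $\nu n$ in-edges from $S$; robust outexpansion requires $|RN^+_{\nu}(S)|\ge |S|+\nu n$, which for $|S|$ near $n/2$ means roughly half of all vertices, and for $|S|$ near $(1-\tau)n$ means almost every vertex. Your pigeonhole bound (total out-edges from $S$ is $|S|s/2$, each vertex absorbs at most $s/2$ of them) only yields $|RN^+_\nu(S)|\ge |S|-O(\nu n)$, which is the wrong sign. An arbitrary Eulerian orientation balances in- and out-degrees globally but gives no local control, and it is not clear it is a robust outexpander at all. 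The paper circumvents this: it observes that the underlying \emph{simple} graph $G'$ has $\delta(G')\ge (1/2+\eps/r)n$ and is therefore a robust expander; it then orients $G'$ via a lemma guaranteeing robust outexpansion of the orientation, extracts from this a small \emph{regular} spanning subdigraph $F$ that is still a robust outexpander, and finally completes $F$ to an $s/2$-regular orientation of the full multigraph by orienting the $(s-2|F|)$-regular remainder along an Euler decomposition. The point is that the outexpansion is built into $F$ from the start, not recovered after the fact.

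For the intermediate theorem, your plan is to re-engineer the K\"uhn--Osthus absorber to handle multiplicity profiles. That may be feasible, but it is far heavier than what the paper does, and your sketch is too vague to assess. The paper's argument is short: randomly colour the edges of $D$ with $r$ colours so that each colour class $D_i$ is simple (assign to each edge of the underlying simple digraph a random $m_e$-subset of $[r]$). Each $D_i$ is then an almost-regular robust outexpander; apply the Osthus--Staden approximate Hamilton decomposition to $D_2,\dots,D_r$, absorb the sparse leftovers into Hamilton cycles using short connecting paths and Hamilton connectivity inside $D_1$, and observe that what remains is a simple regular robust outexpander contained in $D_1$, to which K\"uhn--Osthus applies directly. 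No new absorber is needed. Also, your stated reason why K\"uhn--Osthus does not apply to the underlying simple digraph is off: there is no semi-degree threshold beyond linearity in that theorem; the obstruction is that the underlying simple digraph is not regular and, in any case, decomposing it would not decompose the multidigraph.
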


Observe that this does indeed imply the theorem of Vaughan since if~$G$ is an odd-regular graph on an even number of vertices we can first remove a perfect matching and then apply our result to the remaining graph.

We also prove a directed analogue of \cref{HCD_thm}. 

\begin{theorem} \label{HCD_thm_directed}
    For every $\eps \in (0,1)$ and $r \in \mathbb{N}$, there exists $n_0 \in \mathbb{N}$ such that the following holds for all $n \geq n_0$. Let $D$ be an $s$-regular $n$-vertex multidigraph with multiplicity at most $r$ where $s \geq rn/2 + \eps n$. Then $D$ has a Hamilton decomposition. 
\end{theorem}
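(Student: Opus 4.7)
I would derive \cref{HCD_thm_directed} from the following intermediate statement: \emph{every sufficiently large $s$-regular multidigraph $D$ on $n$ vertices with multiplicity at most $r$ and $s \geq \alpha n$, which is a robust $(\nu, \tau)$-outexpander in an appropriate multidigraph sense, has a Hamilton decomposition.} Granting this, \cref{HCD_thm_directed} follows after checking the expansion hypothesis: since $D$ has multiplicity at most $r$ and minimum semi-degree $s \geq rn/2 + \eps n$, its underlying simple digraph has minimum semi-degree at least $n/2 + (\eps/r) n$, so a standard argument shows that this simple digraph is a robust $(\nu, \tau)$-outexpander for appropriate parameters depending on $\eps/r$. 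Choosing the multidigraph notion of robust outexpansion to count distinct in-neighbours rather than edges-with-multiplicity carries the property over to $D$ itself.

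For the intermediate statement, I would reduce to the K\"uhn--Osthus Hamilton decomposition theorem on simple digraphs via a blow-up. Define $\tD$ on vertex set $V(D) \times [r]$: for each edge $e = (u,v) \in E(D)$ (counted with multiplicity), place a single edge $(u^{i(e)}, v^{j(e)})$ in $\tD$, with indices $(i(e), j(e)) \in [r]^2$ chosen so that (i)~$\tD$ is simple, (ii)~$\tD$ is $(s/r)$-regular, and (iii)~$\tD$ inherits enough expansion from $D$ to be a robust outexpander. The hypothesis that the multiplicity is at most $r$ gives exactly the room needed in the $r \times r$ grid of candidate edges between each pair of classes to avoid collisions, and the existence of such an assignment follows from a Hall-type or defect matching argument (distributing the $s$ out- and in-edges of each $v$ equally among the $r$ copies). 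K\"uhn--Osthus then supplies a Hamilton decomposition of $\tD$ into $s/r$ Hamilton cycles of length $rn$ each.

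The last step is to convert this decomposition of $\tD$ into one of $D$. Collapsing each class $\{v\} \times [r]$ to $v$, each Hamilton cycle $H$ of $\tD$ projects to a closed walk $\pi(H)$ in $D$ visiting each vertex exactly $r$ times, so the $s/r$ projections $\pi(H_1), \ldots, \pi(H_{s/r})$ partition $E(D)$ into $r$-fold covering walks. To convert each such walk into $r$ Hamilton cycles (yielding $s$ Hamilton cycles of $D$ in total), I would regard $\pi(H_i)$ as an Eulerian circuit of an $r$-regular sub-multidigraph $D_i \subseteq D$ and re-pair the $r$ incoming edges with the $r$ outgoing edges at each vertex so that $D_i$ splits into $r$ edge-disjoint Hamilton cycles. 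This re-pairing step is the principal technical obstacle: a naive choice typically produces only a cycle cover, so one has to exploit the inherited robust outexpansion of each $D_i$ to merge short cycles into Hamilton cycles (for example via a rotation-extension or absorption argument), or alternatively design the blow-up so that the projected walks split favourably from the outset.
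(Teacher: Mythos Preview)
Your first paragraph matches the paper exactly: the underlying simple digraph of $D$ has minimum semi-degree at least $(1/2+\eps/r)n$, hence is a robust $(\nu,\tau)$-outexpander by a standard lemma, and \cref{HCD_thm_directed} then follows from the intermediate statement (which is the paper's \cref{HCD_thm2}).

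For the intermediate statement itself, your blow-up approach has a real gap at the projection step, which you flag but do not close. Each projected cycle $\pi(H_i)$ yields an $r$-regular sub-multidigraph $D_i \subseteq D$, and you propose to split it into $r$ Hamilton cycles by exploiting ``the inherited robust outexpansion of each $D_i$''. But $D_i$ has \emph{constant} semi-degree $r$ on $n$ vertices, so it cannot be a robust outexpander for any usable parameters: once $\nu n > r$, the $\nu$-robust out-neighbourhood of every set is empty. Rotation--extension and absorption both require linear minimum semi-degree, so neither is available here. Worse, Hamilton-decomposing an arbitrary $r$-regular multidigraph is essentially the problem you began with, so unless the blow-up is engineered to force the projections to split into Hamilton cycles from the outset (and nothing in a Hall-type edge assignment does this), the reduction is circular. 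A secondary issue is that $s$ need not be divisible by $r$, so an exactly $(s/r)$-regular simple blow-up may not even exist.

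The paper avoids sparse pieces altogether. It randomly colours $E(D)$ with $r$ colours so that each colour class $D_i$ is a \emph{simple} digraph that is still a robust outexpander with all semi-degrees $(1\pm\xi)s/r = \Theta(n)$; it then almost-decomposes $D_2,\dots,D_r$ into Hamilton cycles via the Osthus--Staden approximate decomposition theorem, absorbs the $o(n)$-degree leftovers into further Hamilton cycles by borrowing short paths from $D_1$ (using Hamilton-connectedness of robust outexpanders), and finally applies the K\"uhn--Osthus theorem to the simple, regular, linearly dense remainder inside $D_1$. Every digraph to which a black box is applied has linear minimum semi-degree, which is exactly what your projected $r$-regular pieces lack.
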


We derive these results from a more general result about multidigraphs that are robust outexpanders. To state this result we need the following definitions. 

For an $n$-vertex digraph $D$, a set $S \subseteq V(D)$, and $\nu \in (0,1)$, we define the \emph{$\nu$-robust outneighbourhood of $S$ in $D$} to be $RN_{\nu, D}^+(S) \coloneqq \{ v \in V(D) \colon |S \cap N_D^-(v)| \geq \nu n\}$.
For $\nu, \tau \in (0,1)$, a \emph{robust $(\nu, \tau)$-outexpander} is a simple\footnote{A multidigraph is \emph{simple} if its multiplicity is $1$ (such a multidigraph is also called a \emph{digraph} or a \emph{simple digraph}).} $n$-vertex digraph $D$ such that for each set $S \subseteq V(D)$ with $\tau n \leq |S| \leq (1-\tau) n$, we have $|RN_{\nu, D}^+(S)| \geq |S| + \nu n$. 
For a multidigraph $D$, the \emph{underlying simple digraph of $D$} is the digraph $\tD$ obtained from $D$ by dropping any multiple edges (that is an ordered pair of vertices is an edge in $\tD$ if and only if it appears (at least once) in $E(D)$). We analogously define the \emph{underlying simple graph} of a multigraph.

The \emph{hierarchy} notation $0 < a \ll b < 1$ is a short form of stating that there exists a nondecreasing function $a_0 \colon (0,1) \rightarrow (0,1)$ such that the statement that follows after holds for all $a,b \in (0,1)$ with $a \leq a_0(b)$ (that is the statement holds as long as $a$ is sufficiently small in terms of $b$). Hierarchies with more variables are defined similarly and whenever $1/a$ appears in a hierarchy we implicitly assume that $a$ is a positive integer.

The following theorem is our main result from which we derive all other results in this note.

\begin{theorem} \label{HCD_thm2}
    Let $1/n \ll \nu \ll \tau \ll 1/r, \alpha \leq 1$. Let $D$ be a $n$-vertex multidigraph with multiplicity at most $r$ such that the following hold.
    \begin{enumerate}[label = \upshape{(\alph*)}]
        \item $D$ is $s$-regular for some $s \geq \alpha n$.
        \item The underlying simple digraph of $D$ is a robust $(\nu, \tau)$-outexpander.
    \end{enumerate}
   Then $D$ has a Hamilton decomposition.
\end{theorem}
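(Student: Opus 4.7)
The plan is to reduce to the simple digraph case, which is handled by the Hamilton decomposition theorem of K\"uhn and Osthus. Concretely, I aim to partition $E(D)$ into $r$ edge-disjoint simple subdigraphs $D_1, \dots, D_r$ such that each $D_i$ is $s_i$-regular with $\sum_{i=1}^r s_i = s$, and each $D_i$ is itself a robust outexpander (with only slightly weaker parameters than $\tD$). Applying the K\"uhn-Osthus theorem to each $D_i$ and taking the union of the resulting Hamilton decompositions then gives a Hamilton decomposition of $D$ into $s$ Hamilton cycles.

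\textbf{Step 1 (Random partition).} For each ordered pair $(u,v)$ with multiplicity $m_{uv} \in \{1,\dots,r\}$ in $D$, independently choose a uniformly random subset of $\{1,\dots,r\}$ of size $m_{uv}$ and place one copy of $(u,v)$ into $D_i$ for each $i$ in this subset. Each $D_i$ is simple by construction, and standard Chernoff concentration shows that with high probability every vertex has in- and out-degree within $O(\sqrt{n}\log n)$ of $s/r$ in every $D_i$. Moreover, each edge of $\tD$ survives into $D_i$ with probability at least $1/r$, independently across edges, so a standard sparsification argument for robust outexpanders (handling the union bound over subsets $S$ by first reducing to a polynomial-size family via a discretisation of $S$, then applying concentration) shows that each $D_i$ is a robust $(\nu/(2r), \tau)$-outexpander with high probability. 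The hierarchy $1/n \ll \nu \ll \tau \ll 1/r$ ensures that $\nu/(2r) \gg 1/n$, so the sparsification is not too destructive.

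\textbf{Step 2 (Regularization).} Fix integer targets $s_1, \dots, s_r \in \{\lfloor s/r\rfloor, \lceil s/r\rceil\}$ with $\sum s_i = s$. The deficiencies $\delta^\pm_i(v) := d^\pm_{D_i}(v) - s_i$ satisfy $\sum_i \delta^\pm_i(v) = 0$ and $|\delta^\pm_i(v)| = o(n)$. I resolve these by edge reassignments between colour classes: swapping the colours of two copies of the same multi-edge lying in different classes is always free, and reassigning a pair $(u,v) \in D_i$, $(u,w) \in D_j$ with $(u,v) \notin D_j$ and $(u,w) \notin D_i$ transfers out-degree imbalance at $v$ and $w$ between the two classes while preserving simplicity; analogous swaps handle in-degrees. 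Since the imbalances are $o(n)$ while each vertex has $\Omega(n)$ edges in every class, one can iteratively zero out all $\delta^\pm_i(v)$. The total number of edges moved is $o(n^2)$, which is too small to destroy the robust outexpansion established in Step 1. This is the main obstacle of the proof: achieving \emph{exact} regularity while maintaining simplicity and robust outexpansion in every $D_i$ requires careful accounting, but the abundance of slack edges guarantees feasibility.

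\textbf{Step 3 (Apply K\"uhn-Osthus).} Each $D_i$ is now a simple $s_i$-regular digraph with $s_i \geq \alpha n/(2r)$ that is a robust $(\nu/(3r), \tau)$-outexpander; since $1/n \ll \nu/(3r) \ll \tau \ll \alpha/(2r) \leq 1$ still holds under the given hierarchy, the K\"uhn-Osthus Hamilton decomposition theorem for simple digraphs applies to each $D_i$. Thus $D_i$ decomposes into $s_i$ Hamilton cycles, and the union over $i$ is the desired Hamilton decomposition of $D$.
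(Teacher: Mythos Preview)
Your Step~1 coincides with the paper's opening move, but Step~2 is a genuine gap. Achieving \emph{exact} regularity in all $r$ colour classes simultaneously, while preserving simplicity, is a non-trivial combinatorial problem that you have not actually solved. The swap you describe keeps out-degree at $u$ fixed and shuffles \emph{in}-degree imbalance between $v$ and $w$ (not out-degree as you write); fixing one defect creates others, and there is no evident potential function that monotonically decreases. The simplicity constraint is a real obstruction: an edge of multiplicity $r$ lies in every $D_i$ and can never be moved, and even multiplicity-$1$ edges can only be reassigned to classes that do not already contain them. Recasting the problem as a degree-constrained subgraph question on the bipartite double cover, one needs Hall-type conditions to hold that are not automatic from the hypotheses; a vague appeal to ``abundance of slack edges'' does not establish them. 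Even if such an argument can be made to work, it would be the crux of the proof, and you have not supplied it.

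The paper avoids this issue entirely by taking a different route after the random split. Rather than forcing exact regularity in every piece, it applies the Osthus--Staden \emph{approximate} Hamilton decomposition theorem to $D_2,\dots,D_r$ (which only needs the approximate regularity that Step~1 already provides), leaving a sparse leftover of maximum degree $O(\delta n)$ in each. These leftovers are chopped into small matchings; each matching is extended first to a short path and then to a Hamilton cycle using a few edges of $D_1$, exploiting that robust outexpanders are Hamilton-connected. The point is that exact regularity now comes for free: $D$ was regular and everything removed so far is a disjoint union of Hamilton cycles, so what remains inside $D_1$ is automatically a regular, simple robust outexpander, to which the K\"uhn--Osthus theorem applies once. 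Your plan would shortcut this absorption step, but only by trading it for a regularisation lemma that is at least as hard and that you have left unproved.
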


We also prove an analogous result for (undirected) multigraphs. For this we need the corresponding undirected version of the above definition of robust outexpansion.
For an $n$-vertex graph~$G$, a set $S \subseteq V(G)$, and $\nu \in (0,1)$, we define the \emph{$\nu$-robust neighbourhood of $S$ in~$G$} to be $RN_{\nu, G}(S) \coloneqq \{v \in V(G) \colon |S \cap N_G(v)| \geq \nu n\}$. 
For $\nu, \tau \in (0,1)$, a \emph{robust $(\nu, \tau)$-expander} is a simple $n$-vertex (undirected) graph~$G$ such that for each set $S \subseteq V(G)$ with $\tau n \leq |S| \leq (1-\tau) n$, we have $|RN_{\nu, G}(S)| \geq |S| + \nu n$. As discussed in \cite{Kuhn2014}, robust expansion is a very weak notion of quasirandomness, and thus, for example, random graphs of constant density are robust expanders with high probability.

\begin{theorem} \label{HCD_thm3}
    Let $1/n \ll \nu \ll \tau \ll 1/r, \alpha \leq 1$. Let~$G$ be a $n$-vertex multigraph with multiplicity at most $r$ such that the following hold.
    \begin{enumerate}[label = \upshape{(\alph*)}]
        \item~$G$ is $s$-regular for some $s \in 2\mathbb{N}$ with $s \geq \alpha n$.
        \item The underlying simple graph of~$G$ is a robust $(\nu, \tau)$-expander.
    \end{enumerate}
   Then~$G$ has a Hamilton decomposition.
\end{theorem}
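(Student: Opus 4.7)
The plan is to reduce to Theorem~\ref{HCD_thm2} by constructing a suitable Eulerian orientation of~$G$. Since $s$ is even and~$G$ is a robust $(\nu, \tau)$-expander with $\tau \ll \alpha$, every component of~$G$ has all vertices of the same even degree $s \ge \alpha n$, so~$G$ is connected and Eulerian. Any Eulerian orientation therefore yields a multidigraph $D$ that is $(s/2)$-regular, has multiplicity at most $r$, and satisfies $s/2 \ge (\alpha/2)n$; the regularity and multiplicity hypotheses of Theorem~\ref{HCD_thm2} are thus automatic, and the main task is to choose an Eulerian orientation so that the underlying simple digraph $\tD$ is a robust $(\nu', \tau)$-outexpander for some $\nu'$ with $1/n \ll \nu' \ll \tau$.

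I would construct the orientation probabilistically. Orient each edge of~$G$ independently and uniformly at random, producing an auxiliary multidigraph $D_0$. For any fixed $S \subseteq V(G)$ with $\tau n \le |S| \le (1-\tau)n$ and any $v \in RN_{\nu, G}(S)$, each $u \in S \cap N_G(v)$ contributes to $S \cap N^-_{\widetilde{D_0}}(v)$ with probability $1 - 2^{-m_{uv}} \ge 1/2$, independently across $u$; so a Chernoff bound yields $|S \cap N^-_{\widetilde{D_0}}(v)| \ge \nu n/3$ except with probability $e^{-\Omega(\nu n)}$. Since the corresponding failure events at distinct vertices depend on essentially disjoint sets of edges, a small-mean Chernoff estimate controls the total number of failing vertices at a level well below $\nu n / 2$; the resulting bound of the form $e^{-\Omega(\nu^2 n^2)}$ is strong enough to survive a union bound over all $2^n$ sets $S$. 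Thus with high probability $\widetilde{D_0}$ is a robust $(\nu/3, \tau)$-outexpander, while simultaneously $|d^+_{D_0}(v) - s/2| = O(\sqrt{s \log n})$ for every $v$ by Hoeffding.

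To turn $D_0$ into an Eulerian multidigraph, I would flip the orientations of short directed paths routing surplus out-degree from vertices with $d^+_{D_0} > s/2$ to vertices with $d^+_{D_0} < s/2$; such short paths exist in abundance since $\widetilde{D_0}$ is a robust outexpander. The total number of edges whose orientation changes is at most $o(n^2)$, so the resulting Eulerian orientation $D$ still has $\tD$ a robust $(\nu', \tau)$-outexpander for a suitable $\nu'$. Applying Theorem~\ref{HCD_thm2} to $D$ (with $\alpha/2$ in place of $\alpha$) then supplies a Hamilton decomposition of $D$ into $s/2$ directed Hamilton cycles, and forgetting orientations yields the desired Hamilton decomposition of~$G$. \emph{The main obstacle} is the orientation step: preserving robust outexpansion against the $2^n$ union bound requires exploiting the near-independence of the failure events across distinct vertices, and the subsequent Eulerian correction by path-flipping must be executed so that the outexpansion parameter decreases by only a bounded factor.
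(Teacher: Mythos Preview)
Your approach is viable and would yield a correct proof, but it takes a genuinely different route from the paper's. The paper does not orient all of $G$ at random. Instead it (i) cites an existing lemma to orient only the underlying simple graph $G'$ so that the resulting oriented graph is a robust $(\nu/4,\tau)$-outexpander with nearly balanced degrees, (ii) cites another existing lemma to extract from this orientation a $\xi n$-regular spanning subdigraph $F$ that is itself a robust $(\nu',\tau)$-outexpander, and then (iii) takes the undirected multigraph $G\setminus F$, decomposes it into (possibly $2$-)cycles using that all degrees are even, and orients each cycle consistently. Putting $F$ back yields an $(s/2)$-regular orientation of $G$ whose underlying simple digraph contains $F$ and is hence automatically a robust $(\nu',\tau)$-outexpander; then Theorem~\ref{HCD_thm2} applies. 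The point is that steps (ii)--(iii) make the orientation exactly regular \emph{for free}, with no correction phase and no danger to the outexpansion (since $F$ was set aside first).

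By contrast, you orient the whole multigraph at random and then repair the $O(\sqrt{n\log n})$ degree defects by flipping short paths. This works, but both phases that you flag as delicate really are. For the union bound, the failure events $B_v$ are not independent (for $v,v'\in S$ the edge $\{v,v'\}$ is shared), so a ``small-mean Chernoff'' on $\sum_v B_v$ is not immediate; one clean fix is to condition on the orientations of all edges inside $W$ when bounding $\mathbb{P}[\bigcap_{v\in W}B_v]$ for a set $W$ of size $\nu n/2$, after which the remaining randomness is genuinely independent and still gives $e^{-\Omega(\nu^2 n^2)}$. For the Eulerian correction, reversing paths can in principle pile up many changes at a single vertex, which would destroy outexpansion locally; you need to argue (e.g.\ via a flow/averaging count) that only $o(n)$ vertices accumulate $\ge \nu n/10$ flipped edges, so that Lemma~\ref{lem:delete} still applies. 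None of this is hard, but the paper's regular-factor-plus-cycle-decomposition trick sidesteps both issues entirely by quoting Lemmas~\ref{lem:orient} and~\ref{lem:factor} as black boxes.
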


The case $r=1$ of all these results was proved by K\"uhn and Osthus in \cite{KUHN2013}.
Throughout, we ignore floors and ceilings whenever doing so does not affect the argument. 

\section{Proofs of secondary results}

In this section, we prove that \cref{HCD_thm2} implies \cref{HCD_thm3,HCD_thm,HCD_thm_directed}.

\subsection{Proof of Theorem~\ref{HCD_thm3}}

To derive \cref{HCD_thm3} from \cref{HCD_thm2} we need the following two lemmas from \cite{Kuhn2014}. The first of these lemmas allows us to orient the edges of a robust expander in such a way that the resulting digraph is a robust outexpander and the in-degree and out-degree of every vertex are not too far apart. 

\begin{lemma}[{\cite[Lemma 3.1]{Kuhn2014}}] \label{lem:orient}
    Suppose that $1/n \ll \eta \ll \nu, \tau, \alpha < 1$. Suppose that~$G$ is a robust $(\nu, \tau)$-expander on $n$ vertices with $\delta(G) \geq \alpha n$. Then one can orient the edges of~$G$ in such a way that the resulting oriented graph $D$ satisfies the following\footnote{For positive reals $a,b,c,d$, we write $a = b(c \pm d)$ for $b(c-d) \leq a \leq b(c+d)$.}:
    \begin{enumerate}[label = \upshape{(\roman*)}]
        \item $D$ is a robust $(\nu/4, \tau)$-outexpander. 
        \item $d_{D}^+(x) = (1 \pm \eta) \frac{d_G(x)}{2}$ and $d_{D}^-(x) = (1 \pm \eta) \frac{d_G(x)}{2}$.
    \end{enumerate}
\end{lemma}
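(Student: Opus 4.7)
The plan is to take $D$ to be the random orientation of $G$ obtained by orienting each edge independently and uniformly at random, and to show that with positive probability $D$ satisfies both (i) and (ii); such an orientation then exists.

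Condition (ii) is immediate from a standard Chernoff bound. For each vertex $x$, the random variable $d_D^+(x)$ is distributed as $\mathrm{Bin}(d_G(x), 1/2)$ with $d_G(x) \geq \alpha n$, so $d_D^+(x) = (1 \pm \eta) d_G(x)/2$ fails with probability $\exp(-\Omega(\eta^2 \alpha n)) = o(1/n)$, using $1/n \ll \eta$. A union bound over the $n$ vertices then handles (ii).

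The main task is condition (i). Fix a set $S$ with $\tau n \leq |S| \leq (1-\tau)n$ and set $T := RN_{\nu, G}(S)$, so $|T| \geq |S| + \nu n$. Write $Z := |T \setminus RN_{\nu/4, D}^+(S)|$; if $Z < 3\nu n/4$ then $|RN_{\nu/4, D}^+(S)| \geq |T| - Z \geq |S| + \nu n/4$, as required. A per-vertex Chernoff gives $\Pr[v \notin RN_{\nu/4, D}^+(S)] \leq \exp(-\nu n/16)$ for each $v \in T$, but this is far too weak to absorb a union bound over the $2^n$ possible choices of $S$. Instead I would bundle $k^* := \lceil 3 \nu n/4 \rceil$ potential bad vertices together: for any subset $U \subseteq T$ of size $k^*$, if all $v \in U$ are bad then $\Sigma_U := \sum_{v \in U} |S \cap N_D^-(v)| < k^* \nu n/4$. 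Splitting each relevant edge's contribution according to its endpoints in $S \cap U$, $S \setminus U$, and $U \setminus S$ shows that $\Sigma_U = e(G[S \cap U]) + W$ with $W \sim \mathrm{Bin}(m, 1/2)$ for some $m \leq \binom{n}{2}$, and that $E[\Sigma_U] \geq k^* \nu n/2$. Applying Chernoff to $W$ therefore gives
\[
\Pr[\Sigma_U < k^* \nu n/4] \leq \exp\bigl(-\Omega(\nu^4 n^2)\bigr).
\]
Union-bounding over the at most $\binom{n}{k^*} \leq 2^n$ choices of $U$ and then over the at most $2^n$ choices of $S$ still leaves an overall bound of $\exp(-\Omega(n^2)) = o(1)$, as desired.

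The main obstacle is precisely this concentration argument: a single potential bad vertex depends on only $\Theta(\nu n)$ independent edge orientations, giving a failure probability merely exponential in $n$, whereas the joint bad event over a bundle of $k^* = \Theta(\nu n)$ potential bad vertices depends on $\Theta(\nu^2 n^2)$ independent edge orientations, boosting the exponent to quadratic in $n$ and thereby defeating the $2^n$ union bound over $S$. Verifying the decomposition $\Sigma_U = e(G[S \cap U]) + W$ and that the binomial summands are genuinely independent is routine, needing only a small case analysis distinguishing edges with both, one, or zero endpoints in $U$.
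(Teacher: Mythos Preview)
This lemma is not proved in the present paper at all: it is quoted as \cite[Lemma~3.1]{Kuhn2014} and invoked as a black box in the derivation of Theorem~\ref{HCD_thm3}, so there is no in-paper argument to compare your proposal against.

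That said, your sketch is correct and is essentially the standard argument (random orientation plus concentration). Condition~(ii) is indeed a direct Chernoff bound. For~(i), you correctly identify the obstacle: the per-vertex estimate $\Pr[v\in T\setminus RN^+_{\nu/4,D}(S)]\le\exp(-\Omega(\nu n))$ cannot survive a $2^n$ union bound over $S$. Your bundling trick---summing in-degrees from $S$ over a set $U$ of $k^*=\Theta(\nu n)$ candidate bad vertices---is exactly the right fix, and the decomposition $\Sigma_U=e(G[S\cap U])+W$ with $W\sim\mathrm{Bin}(m,1/2)$ is accurate: an edge with both endpoints in $S\cap U$ contributes a deterministic $1$ (it is oriented one way or the other), while every other edge between $S$ and $U$ contributes a single independent $\mathrm{Bernoulli}(1/2)$ indicator. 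Hoeffding then gives $\Pr[\Sigma_U<E[\Sigma_U]-k^*\nu n/4]\le\exp(-2(k^*\nu n/4)^2/m)=\exp(-\Omega(\nu^4 n^2))$, which comfortably beats the $\binom{n}{k^*}\cdot 2^n\le 4^n$ union bound over $(U,S)$. This is equivalent to the edge-counting formulation one finds in the literature (showing $e_D(S,U)$ concentrates around $e_G(S,U)/2$ for all pairs $S,U$ simultaneously), just phrased in terms of the sum $\Sigma_U$.
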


For a (simple) digraph $D$, we define its \emph{minimum semi-degree $\delta^0(D)$} to be
\[
    \delta^0(D) \coloneqq \min_{\substack{\circ \in \{+, -\} \\ v \in V(D)}} d^\circ_D(v).
\] The second lemma allows us to find a regular spanning subdigraph of a robust outexpander with linear minimum semi-degree that is still a robust outexpander.

\begin{lemma}[{\cite[Lemma 3.4]{Kuhn2014}}] \label{lem:factor}
    Suppose that $1/n \ll \nu' \ll \xi \ll \nu \leq \tau \ll \alpha < 1$. 
    Let $D$ be a robust $(\nu, \tau)$-outexpander on $n$ vertices with $\delta^0(D) \geq \alpha n$. Then $D$ contains a $\xi n$-factor\footnote{A $\xi n$-factor is a $\xi n$-regular spanning subdigraph.} which is still a robust $(\nu', \tau)$-outexpander.
\end{lemma}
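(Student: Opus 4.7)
The plan is to sample a random subdigraph with prescribed edge marginals and then correct a tiny residual error to achieve exact $\xi n$-regularity.

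\textbf{Step 1 (Fractional factor with bounded weights).} I first show that $D$ admits a fractional $\xi n$-regular spanning subdigraph $f \colon E(D) \to [c, 1]$ for some constant $c = c(\xi, \alpha, \nu, \tau) > 0$, i.e., $\sum_{v : (u,v) \in E(D)} f(u,v) = \xi n = \sum_{u : (u,v) \in E(D)} f(u,v)$ for every $u, v$. Feasibility of the underlying transportation LP follows via max-flow--min-cut from the inequality $e_D(S, V \setminus T) \geq \xi n (|S| - |T|)$ for all $S, T \subseteq V$. For $\tau n \leq |S| \leq (1-\tau)n$ this uses the robust outexpansion of $D$: the set $RN_{\nu, D}^+(S) \setminus T$ has size $\geq |S| - |T| + \nu n$ whenever $|T| \leq |S| + \nu n$, and each of its vertices has $\geq \nu n$ in-neighbors in $S$, so $e_D(S, V \setminus T) \geq \nu n (|S| - |T| + \nu n) \geq \xi n (|S| - |T|)$ using $\xi \ll \nu$. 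For the extreme ranges $|S| < \tau n$ or $|S| > (1-\tau)n$, the simpler bound $\delta^0(D) \geq \alpha n$ combined with $\xi, \tau \ll \alpha$ suffices. Finally, by averaging a feasible solution with small perturbations along cycles in the LP support, I may assume the uniform positive lower bound $f(u,v) \geq c$ for every edge $(u,v)$.

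\textbf{Step 2 (Randomized rounding).} Include each edge $(u,v) \in E(D)$ in $D_1$ independently with probability $f(u,v)$. Then $\mathbb{E}[d_{D_1}^\pm(v)] = \xi n$ exactly, and a Chernoff bound yields $d_{D_1}^\pm(v) = \xi n \pm n^{2/3}$ for every $v$ with high probability. For robust outexpansion, fix any $S$ with $\tau n \leq |S| \leq (1-\tau)n$ and any $v \in RN_{\nu, D}^+(S)$; then $\mathbb{E}[|S \cap N_{D_1}^-(v)|] = \sum_{u \in S \cap N_D^-(v)} f(u,v) \geq c\nu n$, so $|S \cap N_{D_1}^-(v)| \geq c\nu n / 2 =: \nu'' n$ except with probability $e^{-\Omega(n)}$. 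A union bound over the at most $2^n \cdot n$ pairs $(S, v)$ shows that with high probability $D_1$ is a robust $(\nu'', \tau)$-outexpander.

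\textbf{Step 3 (Correction to exact regularity).} Each vertex's in/out-degree surpluses $d_{D_1}^\pm(v) - \xi n$ all lie in $[-n^{2/3}, n^{2/3}]$ and the total surplus sums match. I find a subdigraph $H_{\mathrm{rm}} \subseteq D_1$ and a subdigraph $H_{\mathrm{add}} \subseteq D \setminus D_1$, each with in/out-degrees $O(n^{2/3})$ at every vertex and matching the surplus pattern, so that $D' := (D_1 \setminus H_{\mathrm{rm}}) \cup H_{\mathrm{add}}$ is exactly $\xi n$-regular. Existence of $H_{\mathrm{rm}}, H_{\mathrm{add}}$ reduces to a bipartite degree-realization problem whose Hall-type feasibility conditions hold comfortably because the prescribed degrees $O(n^{2/3})$ are tiny compared to the $\Omega(n)$ slack in semi-degrees both inside $D_1$ and inside $D \setminus D_1$. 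Since at most $n^{2/3}$ in-edges per vertex are altered, $D'$ is still a robust $(\nu', \tau)$-outexpander with $\nu' := \nu''/2 \ll \xi$, as required.

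The main obstacle is Step 1: proving LP feasibility requires genuine use of robust outexpansion (the bound $\delta^0(D) \geq \alpha n$ alone is insufficient for the intermediate range of $|S|$), and then boosting the fractional solution to have a uniform positive lower bound on all edges (without which the Chernoff bounds in Step 2 become too weak) takes further care via a cycle-perturbation averaging argument.
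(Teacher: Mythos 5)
This lemma is imported by the paper from \cite[Lemma 3.4]{Kuhn2014} without proof, so there is no in-paper argument to compare against; I am therefore assessing your proposal on its own terms. Your architecture (flow-based fractional factor, randomized rounding, small correction) is a legitimate route, and you correctly identify that robust outexpansion --- not just $\delta^0(D) \geq \alpha n$ --- is what makes the transportation LP feasible; your verification of $e_D(S, V\setminus T) \geq \xi n(|S|-|T|)$ is essentially right. But Step~2 contains a step that fails as written. For a fixed pair $(S,v)$ the Chernoff bound gives failure probability $e^{-\Theta(c\nu n)}$, where $c\nu$ is a very small constant, so the union bound over $2^n \cdot n$ pairs gives $2^n n\, e^{-\Theta(c\nu n)} \to \infty$ and proves nothing. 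This is precisely the difficulty that the proof of \cref{lem:still_robust} (note its error bound $\exp(-\nu^3 n^2)$) is designed to overcome: one must not demand that \emph{every} $v \in RN^+_{\nu,D}(S)$ survive, only that at most, say, $\nu n/2$ of them drop out of the robust outneighbourhood; the events ``$v$ drops out'' for distinct $v$ are independent because they depend on disjoint sets of edges, which yields failure probability $e^{-\Omega(n^2)}$ per set $S$ and does beat $2^n$. The fix is standard and available in your setting (edges are still included independently, merely with non-uniform probabilities $f(e) \geq c$), but the argument as you wrote it is incorrect.

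Two further steps are asserted rather than proved, and both hide real work. In Step~1, ``averaging with small perturbations along cycles in the LP support'' does not obviously produce $f(e) \geq c$ on every edge, since cycle exchanges only redistribute mass within reach of the current support; the clean route is to impose the lower bound $c$ directly in the Gale--Hoffman feasibility condition and redo the case analysis, which works for $c \ll \xi\nu$ but must actually be carried out. In Step~3, ``Hall-type feasibility holds comfortably because the prescribed degrees are tiny'' is not a proof: tiny prescribed degrees are not realizable from minimum-degree slack alone (the same disconnected bipartite examples that defeat Hall's condition for perfect matchings defeat it here), so you again need the expansion of $D_1$ and of $D \setminus D_1$ together with a feasibility computation. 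For comparison, the argument in \cite{Kuhn2014} is shorter: take a uniformly random sparsification $\Gamma$ of $D$, which by the analogue of \cref{lem:still_robust} is a robust outexpander with small maximum semi-degree, and then extend $\Gamma$ to an exact $\xi n$-factor by a single max-flow argument inside $D - \Gamma$; the factor inherits robust outexpansion simply because it contains $\Gamma$. That route avoids both the uniform lower bound on $f$ and the two-sided correction step.
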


We now show that \cref{HCD_thm2} implies its undirected analogue \cref{HCD_thm3}.

\begin{proof}[Proof of \cref{HCD_thm3} assuming \cref{HCD_thm2}]
    Choose new constants $\nu'$ and $\xi$ such that $1/n \ll \nu' \ll \xi \ll \nu \ll \tau \ll 1/r, \alpha$. 
    Note that we can write $G = G' \dot\cup H$ where $G'$ is the underlying simple graph of~$G$ and $H$ is a multigraph that is edge-disjoint from $G'$. By assumption, $G'$ is a robust $(\nu, \tau)$-expander with $\delta(G') \geq \alpha n / r$. By \cref{lem:orient}, there exists an orientation $D$ of $G'$ such that $D$ is a robust $(\nu/4, \tau)$-outexpander with $\delta^0(D) \geq \alpha n / 3r$. By \cref{lem:factor}, $D$ contains a $\xi n$-factor $F$ that is a robust $(\nu', \tau)$-outexpander. 
    Let $\tG$ be the undirected multigraph obtained from~$G$ by deleting the edges contained in $F$. Note that $\tG$ is $(s - 2\xi n)$-regular. Since $s$ is even, there exists a decomposition of $\tG$ into cycles (we allow 2-cycles, that is two parallel edges). Orienting each of these cycles consistently and then combining this with $F$ gives an orientation $\tD$ of~$G$ that is $s/2$-regular and such that its underlying simple digraph is a robust $(\nu', \tau)$-outexpander. By applying \cref{HCD_thm2} to $\tD$, we get a Hamilton decomposition of $\tD$ which, by dropping the orientations, gives the desired Hamilton decomposition of~$G$.
\end{proof}

\subsection{Proof of Theorem~\ref{HCD_thm_directed} and Theorem~\ref{HCD_thm}}

\cref{HCD_thm_directed} follows from \cref{HCD_thm2} by the following lemma which states that the minimum semi-degree condition on $D$ implies that $D$ is a robust outexpander.

\begin{lemma}[{\cite[Lemma 3.7]{Kuhn2014}}] \label{lem:min_deg_robust_directed}
    Let $1/n \ll \nu \ll \tau \ll \eps < 1$. Let $D$ be a digraph on $n$ vertices with minimum semi-degree $\delta^0(D) \geq (1/2 + \eps)n$. Then $D$ is a robust $(\nu, \tau)$-outexpander.
\end{lemma}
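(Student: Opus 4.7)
The plan is a double-counting argument split into two regimes according to the size of $S$. Fix $S \subseteq V(D)$ with $\tau n \leq |S| \leq (1-\tau)n$, write $s = |S|$, and let $R = RN^+_{\nu, D}(S)$; the goal is to show $|R| \geq s + \nu n$.

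For the small-$S$ regime, I would count the edges of $D$ with tail in $S$ in two ways. Summing out-degrees gives at least $(1/2+\eps)ns$ such edges, while grouping by head yields the upper bound $|R|\cdot s + (n-|R|)\cdot \nu n$, since $|S \cap N^-_D(v)| \leq s$ always and $|S \cap N^-_D(v)| < \nu n$ whenever $v \notin R$. Rearranging gives
\[
|R|(s - \nu n) \;\geq\; (1/2+\eps)ns - \nu n^2,
\]
which, using $s \geq \tau n$ and $\nu \ll \tau$, simplifies to $|R| \geq (1/2+\eps)n - \nu n/\tau$; choosing $\nu \ll \eps\tau$ this is at least $(1/2+\eps/2)n$. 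This already beats the target $s + \nu n$ whenever $s \leq (1/2 - \eps + \nu)n$.

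For the large-$S$ regime $s > (1/2 - \eps + \nu)n$, I would argue directly that $R = V(D)$: any $v \notin R$ has at most $\nu n$ inneighbours in $S$, hence at least $d^-_D(v) - \nu n \geq (1/2 + \eps - \nu)n$ inneighbours in $V \setminus S$, which forces $|V \setminus S| \geq (1/2 + \eps - \nu)n$, contradicting $s > (1/2 - \eps + \nu)n$. Hence $|R| = n \geq s + \nu n$, the last inequality using $s \leq (1 - \tau)n$ and $\nu \ll \tau$. The two regimes cover every admissible $s$ because $(1/2 - \eps + \nu) + \nu \leq 1/2 + \eps/2$ under the hierarchy $\nu \ll \tau \ll \eps$, so no serious obstacle arises; the argument is purely a quantitative degree count, and the hierarchy is set up precisely to give each case enough slack.
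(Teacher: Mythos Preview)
Your argument is correct: the double count in the small-$S$ case together with the complement-degree argument in the large-$S$ case is exactly the standard proof of this fact, and your hierarchy bookkeeping is fine (in fact your small-$S$ bound $|R|\geq (1/2+\eps/2)n$ already handles all $s\leq (1/2+\eps/2-\nu)n$, so the final overlap check is superfluous). Note that the paper does not supply its own proof of this lemma; it is quoted verbatim from \cite[Lemma 3.7]{Kuhn2014}, where the proof is essentially the same counting argument you give.
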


\begin{proof}[Proof of \cref{HCD_thm_directed} assuming \cref{HCD_thm2}]
    Let $\nu$ and $\tau$ be new constants such that $1/n \ll \nu \ll \tau \ll \eps, 1/r$. Note that the underlying simple digraph $D'$ of $D$ has minimum semi-degree $\delta^0(D') \geq (1/2 + \eps/r)n$. By \cref{lem:min_deg_robust_directed}, $D'$ is a robust $(\nu, \tau)$-outexpander. Hence we are done by \cref{HCD_thm_directed}.
\end{proof}

Analogously, \cref{HCD_thm} follows from \cref{HCD_thm3} by the following lemma (the undirected version of \cref{lem:min_deg_robust_directed}).

\begin{lemma}[{\cite[Lemma 3.8]{Kuhn2014}}] \label{lem:min_deg_robust}
    Let $1/n \ll \nu \ll \tau \ll \eps < 1$. Let~$G$ be a graph on $n$ vertices with minimum degree $\delta(G) \geq (1/2 + \eps)n$. Then~$G$ is a robust $(\nu, \tau)$-expander.
\end{lemma}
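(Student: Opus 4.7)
Fix an arbitrary set $S \subseteq V(G)$ with $\tau n \leq |S| \leq (1-\tau)n$, and write $T := RN_{\nu, G}(S)$ and $B := V(G) \setminus T$. The goal is to show $|T| \geq |S| + \nu n$. By the definition of $B$, every $v \in B$ satisfies $|N_G(v) \cap S| < \nu n$, and hence
\[
    |N_G(v) \setminus S| \;>\; d_G(v) - \nu n \;\geq\; (1/2 + \eps - \nu)n.
\]
I would split the argument into two regimes according to the size of $S$, chosen so that the threshold between them has enough slack (of order $\eps n$) to swallow the additive error $\nu n$, thanks to the hierarchy $\nu \ll \tau \ll \eps$.

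\textbf{Large $S$ case.} Suppose $|S| \geq (1/2 - \eps + \nu)n$. Then $|V(G) \setminus S| = n - |S| \leq (1/2 + \eps - \nu)n$, which contradicts the strict lower bound $|N_G(v) \setminus S| > (1/2 + \eps - \nu)n$ that would have to hold for any $v \in B$. Hence $B = \emptyset$, giving $|T| = n$, and since $|S| \leq (1-\tau)n$ with $\nu \leq \tau$ this already yields $|T| = n \geq |S| + \nu n$.

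\textbf{Small $S$ case.} Suppose $|S| < (1/2 - \eps + \nu)n$. Double-count pairs $(u,v)$ with $u \in S$ and $uv \in E(G)$. On the one hand this count equals $\sum_{u \in S} d_G(u) \geq (1/2+\eps)|S| n$ by the minimum-degree hypothesis. On the other hand, splitting by whether $v \in T$ or $v \in B$ and using the trivial bound $|N_G(v) \cap S| \leq |S|$ for $v \in T$ and the bound $|N_G(v) \cap S| < \nu n$ for $v \in B$, the count is at most $|T||S| + |B|\nu n \leq |T||S| + \nu n^2$. Rearranging and using $|S| \geq \tau n$ gives
\[
    |T| \;\geq\; (1/2+\eps)n - \nu n / \tau \;\geq\; (1/2 + \eps/2)n,
\]
where the second inequality uses $\nu \ll \tau, \eps$. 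On the other hand, in this case $|S| + \nu n \leq (1/2 - \eps + 2\nu)n \leq (1/2 + \eps/2) n$ since $\nu \ll \eps$, so $|T| \geq |S| + \nu n$ as required.

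\textbf{Main obstacle.} There is no genuine obstacle: the lemma is a routine consequence of the Dirac-type degree hypothesis via one double-count together with the pigeonhole observation ruling out $B \neq \emptyset$ when $S$ is large. The only care needed is to align the two case thresholds so that they cover the full range of $|S|$ while leaving a gap of order $\eps n$ to absorb the $\nu n$ and $\nu n / \tau$ error terms, which is exactly what the hierarchy $\nu \ll \tau \ll \eps$ provides.
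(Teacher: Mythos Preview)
Your argument is correct. The paper does not actually prove this lemma; it merely quotes it from \cite{Kuhn2014}, so there is no proof in the paper to compare against. Your two-case argument (pigeonhole when $|S|$ is large enough that $V(G)\setminus S$ cannot accommodate the neighbourhood of any $v\in B$, and a double-count of $S$--$V(G)$ incidences when $|S|$ is small) is the standard and essentially only way this is done, and the hierarchy is used exactly as intended to absorb the $\nu n/\tau$ loss.
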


\begin{proof}[Proof of \cref{HCD_thm} assuming \cref{HCD_thm3}]
    Let $\nu$ and $\tau$ be new constants such that $1/n \ll \nu \ll \tau \ll \eps, 1/r$. Note that the underlying simple graph $G'$ of~$G$ has minimum degree $\delta(G') \geq (1/2 + \eps/r)n$. By \cref{lem:min_deg_robust}, $G'$ is a robust $(\nu, \tau)$-expander. Hence we are done by \cref{HCD_thm3}.
\end{proof}

\section{Proof of Theorem~\ref{HCD_thm2}}

In this section we prove \cref{HCD_thm2}, the main theorem of this note. The main ingredients for our proof are the following two results from \cite{KUHN2013} and \cite{Osthus2013}. They state, respectively, that a robust outexpander with linear minimum semi-degree has a Hamilton decomposition if it is regular and can be almost decomposed into Hamilton cycles if it is almost regular.

\begin{theorem}[{\cite[Theorem 1.2]{KUHN2013}}] \label{thm:regular_HCD}
    Let $1/n \ll \nu \ll \tau \ll \alpha \leq 1$. Let $D$ be a $s$-regular simple digraph on $n$ vertices that is a robust $(\nu, \tau)$-outexpander where $s \geq \alpha n$. Then $D$ has a Hamilton decomposition.
\end{theorem}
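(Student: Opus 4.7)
The plan is to use the absorbing method in combination with robust expansion. The overall strategy has three phases: (i) reserve a carefully constructed sparse ``absorbing'' subdigraph $A \subseteq D$, (ii) approximately decompose $D - A$ into Hamilton cycles, leaving a sparse regular leftover $L$, and (iii) exploit the structure of $A$ to decompose $A \cup L$ into the exact number of Hamilton cycles required to account for all remaining edges.

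For phase (i), I would construct $A$ using the robust $(\nu,\tau)$-outexpansion to build a rich system of short paths that can act as ``chords'' to splice into or shortcut through any partial cycle structure. The goal is that $A$ is designed so that for any sparse regular digraph $L$ (of degree roughly $\xi n$ for some small $\xi$) edge-disjoint from $A$, the digraph $A \cup L$ admits a Hamilton decomposition. Such an absorber is typically built by a random partitioning of the vertex set together with a collection of path systems linking these parts, using the fact that in a robust outexpander any two linear-sized sets are joined by many internally short paths.

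For phase (ii), because $A$ has degree much smaller than $\nu n$, the digraph $D - A$ remains a robust $(\nu', \tau)$-outexpander for slightly worse $\nu'$. I would then iteratively extract Hamilton cycles, each one reducing the regularity by one. The core tool here is a digraph analogue of the Pósa rotation-extension technique, driven by robust outexpansion: starting from a long path, expansion guarantees many candidate extensions, and when no extension is possible, performing rotations produces new endpoints with fresh outneighbourhoods. Iterating until the remaining regularity drops to the order of $\xi n$ yields an approximate decomposition whose uncovered leftover $L$ is a regular digraph of degree $\xi n$.

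The main obstacle, and indeed the heart of the proof, is constructing the absorber $A$ with enough flexibility to absorb \emph{any} sparse leftover $L$ that could result from the approximate-decomposition phase. The difficulty is that $A$ must be fixed \emph{before} $L$ is determined, yet must succeed uniformly over all possible leftovers. The typical resolution is to build $A$ as a union of many structurally interchangeable ``gadgets'', each offering several alternative traversals, and then to reduce the decomposition of $A \cup L$ to finding an appropriate perfect matching between edges of $L$ and gadget-slots in $A$; this matching exists by a Hall-type argument that uses both the quasirandom distribution of $L$ in the expander and the symmetry built into $A$. This is where robust outexpansion, regularity, and probabilistic selection together do the real work, and it is by far the most technical step.
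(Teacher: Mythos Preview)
The paper does not prove this theorem at all: it is quoted as \cite[Theorem~1.2]{KUHN2013} and used purely as a black box in the proof of the paper's main result (Theorem~\ref{HCD_thm2}). There is therefore no ``paper's own proof'' to compare your proposal against; the present note simply imports the result.

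That said, your outline is a reasonable caricature of the actual K\"uhn--Osthus argument in \cite{KUHN2013}, which does follow an absorb--then--approximately-decompose architecture. A couple of points in your sketch are inaccurate, however. Phase~(ii) is not carried out by iterated P\'osa rotation--extension; the approximate decomposition relies on a Szemer\'edi-regularity-based approach (essentially the result stated here as Theorem~\ref{cor:almost_reg_HCD}). And the absorber in phase~(iii) is far more intricate than ``interchangeable gadgets plus a Hall-type matching'': it involves a layered hierarchy of path systems, chord sequences, and so-called universal walks, whose construction and analysis occupy the bulk of the original 80-page paper. So while you have the global three-phase structure right, your description substantially understates the technical content of phases~(ii) and~(iii), and in any case none of this is reproduced in the note under review.
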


\begin{theorem}[{\cite[Corollary 1.2]{Osthus2013}}] \label{cor:almost_reg_HCD}
    Let $1/n \ll \xi \ll \delta \ll \nu \ll \tau \ll \alpha \leq 1$. Let $D$ be an $n$-vertex digraph such that
    \begin{enumerate}[label = \upshape{(\roman*)}]
        \item $d_{D}^+(v) = (\alpha \pm \xi)n$, $d_{D}^-(v) = (\alpha \pm \xi)n$ for all $v \in V(D)$ and
        \item $D$ is a robust $(\nu, \tau)$-outexpander.
    \end{enumerate}
    Then $D$ contains at least $(\alpha - \delta) n$ edge-disjoint Hamilton cycles.
\end{theorem}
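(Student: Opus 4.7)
My plan is to combine the Diregularity lemma with an absorbing-style argument to extract edge-disjoint Hamilton cycles iteratively, exploiting the fact that the reduced digraph of $D$ inherits robust outexpansion. In broad terms, I will set aside a small random ``reservoir'' of edges, extract Hamilton cycles from the rest by blowing up Hamilton cycles of the reduced digraph, and use the reservoir to repair the resulting 2-factors and absorb exceptional vertices.

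More concretely, I would first fix a parameter $\eta$ with $\delta \ll \eta \ll \nu$ and sample a random subdigraph $D_0 \subseteq D$ by keeping each edge independently with probability $\eta$. Standard Chernoff concentration yields that $D_0$ is almost-regular with semi-degrees $(\eta \alpha \pm \eta^{3/2}) n$ and is itself a robust $(\nu',\tau)$-outexpander for some $\nu' > 0$, while $D_1 \coloneqq D \setminus D_0$ is almost-regular with semi-degrees $((1-\eta)\alpha \pm 2 \xi) n$ and remains a robust $(\nu/2, \tau)$-outexpander. Then I would iteratively construct Hamilton cycles: at each round, (i) apply the Diregularity lemma to the current residual of $D_1$ to obtain a reduced digraph $R$ on $k$ clusters; (ii) verify by standard cleaning arguments that $R$ inherits robust outexpansion and almost-regularity; (iii) find a Hamilton cycle $C_R$ of $R$ using that linear-semi-degree robust outexpanders are Hamiltonian; (iv) blow $C_R$ up to a spanning 2-factor of the residual by extracting perfect matchings across the $\eps$-regular pairs traversed by $C_R$; and (v) use edges of $D_0$ to absorb the exceptional vertices produced by the Diregularity lemma and to merge the short cycles of the 2-factor into a single Hamilton cycle.

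The main obstacle is step~(v) together with its long-term bookkeeping. Controlling the robust outexpansion across all $(\alpha - \delta)n$ iterations is routine because the hierarchy $\nu \ll \tau \ll \alpha$ gives ample slack and the total semi-degree drop is still bounded by $(\alpha - \delta)$; so the residual remains a robust $(\nu/3, \tau)$-outexpander with linear semi-degree throughout. The delicate part is designing $D_0$ so that its edges can simultaneously (a) repair any 2-factor produced in step~(iv) into a single Hamilton cycle and (b) absorb the $O(\eps n)$ exceptional vertices, and so that enough such edges remain available after each of the $(\alpha - \delta) n$ iterations. This is typically handled by pre-building inside $D_0$ a linear-sized collection of ``chord absorbers'' at every vertex at the outset, via the robust expansion of $D_0$, and verifying a Hall-type condition at each round to certify that one absorber remains usable. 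A careful accounting then shows the reservoir's edge budget $\eta \alpha n$ per vertex suffices for all $(\alpha - \delta) n$ rounds, yielding the desired collection of edge-disjoint Hamilton cycles.
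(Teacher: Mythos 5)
This statement is not proved in the note at all: it is imported verbatim from \cite{Osthus2013} (Corollary~1.2 there), so there is no in-paper proof to compare against. Measured against the actual argument in that reference, your sketch does name the right toolkit --- the Diregularity lemma, blowing up Hamilton cycles (or $1$-factors) of the reduced digraph into $2$-factors via matchings in super-regular pairs, and a reserved edge set for merging cycles and handling exceptional vertices --- which is genuinely the Osthus--Staden strategy in outline.

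However, there is a concrete gap that sinks the proof as written: the claim that ``the residual remains a robust $(\nu/3,\tau)$-outexpander with linear semi-degree throughout'' all $(\alpha-\delta)n$ iterations is false. Each extracted Hamilton cycle removes exactly one out-edge and one in-edge at every vertex, so after $k$ rounds the residual semi-degrees are $(\alpha\pm\xi)n-k$; after $(\alpha-\delta)n$ rounds they are at most $(\delta+\xi)n\ll\nu n$, at which point the $(\nu/3)$-robust outneighbourhood of \emph{every} set is empty and the reduced digraph of the residual cannot inherit any expansion. Deletion lemmas of the type of \cref{lem:delete} only tolerate removing $O(\nu n)$ edges per vertex, not $\Theta(\alpha n)$; the hierarchy $\nu\ll\alpha$ works against you here, not for you. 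This is precisely the difference between finding \emph{some} linear number of edge-disjoint Hamilton cycles (which your iteration would give, for perhaps $\nu n/10$ rounds) and finding an \emph{approximate decomposition}, and it is why the proof in \cite{Osthus2013} does not peel off one cycle at a time from a shrinking host: it fixes a single regularity partition, decomposes the (almost regular) reduced digraph into $1$-factors by a Hall/flow argument, blows each reduced $1$-factor up into many edge-disjoint $1$-factors of $D$ simultaneously, and only then uses the reserved edges to merge cycles --- so that expansion is only ever invoked on dense, pre-planned structures. Re-running the Diregularity lemma at every round, as in your step~(i), is neither necessary nor workable once the residual is sparse. The reservoir/absorber bookkeeping you flag as ``delicate'' is indeed nontrivial, but the iteration scheme itself must be replaced before that issue even arises.
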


To prove \cref{HCD_thm2}, also need the following four facts about robust outexpanders.
The first is the fact that robust outexpanders with linear minimum semi-degree are Hamilton-connected.

\begin{corollary}[{\cite[Corollary 6.9]{Kuhn2015}}] \label{cor:Hamilton_connected}
    Let $1/n \ll \nu \ll \tau \ll \alpha \leq 1$. Let $D$ be a simple digraph on $n$ vertices that is a robust $(\nu, \tau)$-outexpander with $\delta^0(D) \geq \alpha n$. Then for any $x, y \in V(D)$, $D$ contains a Hamilton path from $x$ to $y$.
\end{corollary}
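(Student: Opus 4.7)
The plan is to reduce Hamilton-connectedness to the existence of a single Hamilton cycle in a modified digraph on fewer vertices. Given distinct $x, y \in V(D)$, first form $D^+$ by adjoining the edge $(y, x)$ if it is not already present; adding one edge does not decrease any semi-degree and affects the robust outexpansion inequality $|RN^+_{\nu, \cdot}(S)| \geq |S| + \nu n$ only trivially, so $D^+$ remains a robust $(\nu, \tau)$-outexpander with $\delta^0(D^+) \geq \alpha n$. A Hamilton cycle of $D^+$ that uses the edge $(y, x)$ becomes, after deleting that edge, a Hamilton path of $D$ from $x$ to $y$. So it suffices to find a Hamilton cycle of $D^+$ through the prescribed edge $(y,x)$.

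To force a cycle through $(y, x)$, I would \emph{contract} this edge. Define $D'$ on $n-1$ vertices by replacing $x$ and $y$ with a single new vertex $w$, setting $N^-_{D'}(w) = N^-_D(y) \setminus \{x\}$ and $N^+_{D'}(w) = N^+_D(x) \setminus \{y\}$, and keeping all other edges of $D$. Any Hamilton cycle of $D'$ enters $w$ via some $u$ with $(u,y) \in E(D)$ and leaves $w$ to some $v$ with $(x,v) \in E(D)$; this lifts to the Hamilton cycle $\ldots \to u \to y \to x \to v \to \ldots$ of $D^+$, which uses the edge $(y,x)$ by construction.

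The remaining task is twofold. First, verify that $D'$ is still a robust $(\nu', \tau')$-outexpander with $\delta^0(D') \geq \alpha' n$ for suitable $\nu' \ll \tau' \ll \alpha'$ compatible with the hierarchy $1/n \ll \nu \ll \tau \ll \alpha$. Because contraction merges two vertices into one, semi-degrees change by at most $O(1)$ and the indicator $[v \in RN^+_{\cdot}(S)]$ flips on at most $O(1)$ vertices for each $S$, so the defining inequalities of robust outexpansion and the linear semi-degree bound both survive with parameters degraded only by additive constants, which are absorbed by the new slack. Second, find a Hamilton cycle in $D'$; for this one invokes a Hamilton cycle existence result for robust outexpanders with linear minimum semi-degree (the core of \cite{KUHN2013} that underlies Theorem~\ref{thm:regular_HCD}). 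If one insists on citing only the excerpt, one first extracts from $D'$ an almost-regular spanning sub-digraph that retains both $w$'s in- and out-edges to at least one allowed $u$ and $v$, applies Theorem~\ref{cor:almost_reg_HCD} to produce any one Hamilton cycle, and then lifts it back.

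The main obstacle is the final extraction step: the lifting requires the Hamilton cycle in $D'$ to use at least one in-edge and one out-edge at $w$ of the special form above, and naïvely pruning $D'$ to an almost-regular sub-digraph may eliminate all such edges at $w$. Handling this cleanly amounts to proving an ``edge-prescribing'' variant of the Hamilton cycle existence theorem for robust outexpanders, which is the genuinely nontrivial content of Corollary~\ref{cor:Hamilton_connected} and is the step in which robust outexpansion at the merged vertex $w$ is used most heavily.
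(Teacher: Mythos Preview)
The paper does not prove this corollary; it is quoted verbatim from \cite{Kuhn2015} and used as a black box. So there is no ``paper's own proof'' to compare against.

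On the merits of your proposal: the contraction idea is the standard and correct reduction, but the ``main obstacle'' you identify at the end is a phantom. By your own definition of $D'$, \emph{every} in-edge of $w$ lies in $N^-_D(y)\setminus\{x\}$ and \emph{every} out-edge of $w$ lies in $N^+_D(x)\setminus\{y\}$. Hence any Hamilton cycle of $D'$ whatsoever enters $w$ from some $u\in N^-_D(y)$ and leaves to some $v\in N^+_D(x)$, and therefore lifts to a Hamilton cycle of $D^+$ through $(y,x)$. There is no ``special form'' condition to enforce, and no edge-prescribing variant is needed.

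The genuine gap is elsewhere: you need a single Hamilton cycle in $D'$, and none of the results stated in this note gives that directly. Theorem~\ref{thm:regular_HCD} requires regularity, which $D'$ lacks; your proposed workaround via Lemma~\ref{lem:factor} followed by Theorem~\ref{cor:almost_reg_HCD} (or Theorem~\ref{thm:regular_HCD}) fails on the hierarchy, since Lemma~\ref{lem:factor} forces $\xi\ll\nu\leq\tau$ for the degree $\xi n$ of the factor, while both decomposition theorems require $\tau\ll\alpha$ with $\alpha n$ the degree. What you actually need is the (earlier and easier) fact that a robust outexpander with linear minimum semi-degree is Hamiltonian; this is precisely what \cite{Kuhn2015} invokes to prove Corollary~6.9, and your contraction argument is essentially their derivation of Hamilton-connectedness from Hamiltonicity.
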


The next fact is that a random subgraph of a robust outexpander is very likely to still be a robust outexpander.

\begin{lemma}[{\cite[Lemma 4.12]{Girao2023}}, {\cite[Lemma 3.2(ii)]{Kuhn2014}}] \label{lem:still_robust}
    Let $1/n \ll \nu \ll \tau, p \leq 1$. Let $D$ be a robust $(\nu, \tau)$-outexpander on $n$ vertices. Suppose that $\Gamma$ is the spanning random subgraph of $D$ obtained by taking each edge independently with probability $p$. Then, with probability at least $1- \exp(- \nu^3n^2)$, $\Gamma$ is a robust $(p\nu/2, \tau)$-outexpander. 
\end{lemma}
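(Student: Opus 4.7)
The plan is to fix a candidate set $S$ with $\tau n \leq |S| \leq (1-\tau)n$ and show that most vertices of $RN^+_{\nu,D}(S)$ survive into $RN^+_{p\nu/2,\Gamma}(S)$. For each $v \in V(D)$ set $X_v := |S \cap N^-_\Gamma(v)|$. Since $\Gamma$ keeps each edge of $D$ independently with probability $p$, we have $X_v \sim \mathrm{Bin}(|S\cap N^-_D(v)|,p)$, and whenever $v \in RN^+_{\nu,D}(S)$ the mean satisfies $\mathbb{E}[X_v] \geq p\nu n$. A standard Chernoff bound then gives
\[
\Pr\bigl[X_v < p\nu n/2\bigr] \leq \exp(-p\nu n/8).
\]
The key observation is that the events $\{X_v < p\nu n/2\}$ for distinct $v$ depend on disjoint subsets of edges (namely the in-edges of $v$), so they are mutually independent.

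Let $Y_S := |RN^+_{\nu,D}(S)\setminus RN^+_{p\nu/2,\Gamma}(S)|$, i.e.\ the number of vertices in $RN^+_{\nu,D}(S)$ that fail. By independence, for any integer $k$,
\[
\Pr[Y_S \geq k] \leq \binom{n}{k}\exp(-p\nu n k/8).
\]
Taking $k = \nu n/2$ and using a union bound over the at most $2^n$ choices of $S$, the probability that $Y_S \geq \nu n/2$ for \emph{some} valid $S$ is bounded by
\[
2^n \binom{n}{\nu n/2} \exp\bigl(-p\nu^2 n^2/16\bigr) \leq \exp\bigl(2n - p\nu^2 n^2/16\bigr) \leq \exp(-\nu^3 n^2),
\]
where the last inequality uses $\nu \ll p$ together with $1/n \ll \nu$ to absorb the $2n$ term and turn the constant $p\nu^2/16$ into $\nu^3$ comfortably.

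Finally, on the complement event, for every such $S$ we have $Y_S < \nu n/2$, so the robust $(\nu,\tau)$-outexpansion of $D$ yields
\[
|RN^+_{p\nu/2,\Gamma}(S)| \geq |RN^+_{\nu,D}(S)| - Y_S \geq |S| + \nu n - \nu n/2 \geq |S| + (p\nu/2)n,
\]
the last step because $p \leq 1$. Thus $\Gamma$ is a robust $(p\nu/2,\tau)$-outexpander with the claimed probability. The only delicate point is the interplay between the three exponential scales: the per-vertex failure probability $\exp(-\Theta(p\nu n))$, the $2^n$ from ranging over $S$, and the target $\exp(-\nu^3 n^2)$; picking the intermediate threshold $Y_S \approx \nu n/2$ (rather than, say, $\Theta(1)$) is what lets the independent-Chernoff gain of $k$-fold multiplication swallow the $\binom{n}{k}$ and the subsequent $2^n$ union bound.
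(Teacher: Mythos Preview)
The paper does not prove this lemma; it is quoted from \cite{Girao2023} and \cite{Kuhn2014} and used as a black box. Your argument is correct and is essentially the standard proof one finds in those references: Chernoff for each vertex in $RN^+_{\nu,D}(S)$, independence across vertices because in-edges of distinct vertices are disjoint, a threshold $Y_S \geq \nu n/2$ so that the $k$-fold product $\exp(-\Theta(p\nu^2 n^2))$ beats both $\binom{n}{k}$ and the $2^n$ union bound over $S$, and then the hierarchy $\nu \ll p$ and $1/n \ll \nu$ to land at $\exp(-\nu^3 n^2)$. The only cosmetic point is that $2^n\binom{n}{\nu n/2} \leq 4^n = e^{n\ln 4}$ rather than $e^{2n}$, but your bound is in the right direction and the slack in $p\nu^2/16 \geq 2\nu^3$ absorbs it.
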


Finally, we need the following two simple facts which follow easily form the definition of robust outexpanders.

\begin{lemma}[{\cite[Lemma 4.2]{Girao2023}}] \label{lem:delete}
    Let $1/n \ll \eps \ll \nu \ll \tau \leq 1$ and let $D$ be digraph that is a robust $(\nu, \tau)$-outexpander on $n$ vertices. Then any digraph obtained from $D$ by deleting at most $\eps n$ inedges and at most $\eps n$ outedges at each vertex as well as deleting at most $\eps n$ vertices is a robust $(\nu - 2\eps, 2 \tau)$-outexpander.
\end{lemma}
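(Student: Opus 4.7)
The plan is to write $D' = D - X$ for the digraph obtained from $D$ after the allowed deletions, where $X$ comprises at most $\eps n$ vertices together with at most $\eps n$ deleted inedges and $\eps n$ deleted outedges at each surviving vertex. Setting $n' \coloneqq |V(D')|$, so that $(1-\eps)n \leq n' \leq n$, I would fix an arbitrary $S \subseteq V(D')$ with $2\tau n' \leq |S| \leq (1-2\tau)n'$ and verify the robust outexpansion condition of $D'$ for this $S$ with the degraded parameters $(\nu - 2\eps, 2\tau)$.

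First I would check that $S$, viewed as a subset of $V(D)$, still lies in the expansion window of $D$: since $\eps \ll \tau$, we have $|S| \geq 2\tau n' \geq 2\tau(1-\eps)n \geq \tau n$ and $|S| \leq (1-2\tau)n' \leq (1-\tau)n$. Applying the hypothesis that $D$ is a robust $(\nu, \tau)$-outexpander then yields a set $R \coloneqq RN_{\nu,D}^+(S)$ of size at least $|S| + \nu n$ in $D$.

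The next step is to transfer $R$ down to $D'$ with a controlled loss. Any vertex $v \in R \cap V(D')$ has at least $\nu n$ inedges from $S$ in $D$, and at most $\eps n$ of those inedges have been deleted, so at least $\nu n - \eps n \geq (\nu - 2\eps) n'$ of them remain in $D'$; this places $v$ in $RN_{\nu - 2\eps, D'}^+(S)$. Combining this inclusion with the fact that at most $\eps n$ vertices were removed altogether, we obtain
\[
    |RN_{\nu - 2\eps, D'}^+(S)| \geq |R \cap V(D')| \geq |R| - \eps n \geq |S| + (\nu - \eps)n \geq |S| + (\nu - 2\eps)n',
\]
which is exactly the required bound.

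I do not expect a real obstacle: the argument is pure bookkeeping of three $\eps$-losses — the deleted vertices, the deleted inedges at each surviving vertex, and the passage from $n$ to the slightly smaller $n'$ — against the parameter slack $\nu - 2\eps$ and the doubling of $\tau$. The hierarchy $\eps \ll \nu \ll \tau$ guarantees that every loss is absorbed; in particular, the factor of $2$ in front of $\tau$ is what lets the lower-size constraint on $S$ survive the shrinkage of the vertex set, and the factor of $2$ in front of $\eps$ in $\nu - 2\eps$ simultaneously pays for the deleted inedges at each vertex and the replacement of $n$ by $n'$.
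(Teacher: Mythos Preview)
The paper does not give its own proof of this lemma; it merely quotes it from \cite{Girao2023}. Your argument is correct and is precisely the standard bookkeeping proof: verify that $S$ still falls in the $(\tau n, (1-\tau)n)$ window of $D$, apply robust outexpansion in $D$, and then account for the at most $\eps n$ deleted vertices and the at most $\eps n$ deleted inedges at each surviving vertex to conclude that $RN^+_{\nu,D}(S)\cap V(D') \subseteq RN^+_{\nu-2\eps,D'}(S)$ has the required size.
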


\begin{lemma}[{\cite[Lemma 6.6]{Kuhn2015}}] \label{lem:connect}
    Let $1/n \ll \nu \ll \tau \leq 1$. Let $D$ be a simple digraph on $n$ vertices that is a robust $(\nu, \tau)$-outexpander with $\delta^0(D) \geq 2\tau n$. Then for any $x, y \in V(D)$, there exists a (directed) path in~$D$ from $x$ to $y$ of length at most $1/\nu$.
\end{lemma}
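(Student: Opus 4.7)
The plan is a straightforward BFS-style expansion argument, using robust outexpansion to grow the BFS layers by $\nu n$ vertices per step. Fix $x, y \in V(D)$ (the empty path handles $x = y$), and set $A_0 = \{x\}$ and $A_{i+1} = A_i \cup N_D^+(A_i)$, so that $A_i$ is precisely the set of vertices reachable from $x$ by a directed path of length at most $i$. The task reduces to showing $y \in A_i$ for some $i \leq 1/\nu$.

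First I would seed the BFS using the minimum semi-degree at $x$: since $A_1 \supseteq N_D^+(x)$, we have $|A_1| \geq \delta^0(D) \geq 2\tau n$, which places $A_1$ inside the expansion window $[\tau n, (1-\tau)n]$.

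Next, whenever $\tau n \leq |A_i| \leq (1-\tau)n$, robust outexpansion gives $|RN_{\nu, D}^+(A_i)| \geq |A_i| + \nu n$, and since $1/n \ll \nu$ every vertex of $RN_{\nu, D}^+(A_i)$ has at least one in-neighbour in $A_i$ and hence lies in $N_D^+(A_i)$. Thus $|A_{i+1}| \geq |A_i| + \nu n$. Iterating this growth starting from $|A_1| \geq 2\tau n$, there is a smallest index $i^{*} \leq 1 + (1-3\tau)/\nu$ with $|A_{i^{*}}| > (1-\tau)n$.

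Finally I would absorb $y$ using its in-degree: by inclusion-exclusion $|A_{i^{*}} \cap N_D^-(y)| \geq |A_{i^{*}}| + \delta^0(D) - n \geq (1-\tau)n + 2\tau n - n = \tau n \geq 1$, so $y \in N_D^+(A_{i^{*}}) \subseteq A_{i^{*}+1}$. This yields a directed path from $x$ to $y$ of length at most $i^{*} + 1 \leq 2 + (1-3\tau)/\nu$, which is at most $1/\nu$ because $\nu \ll \tau$. There is no real obstacle here; the only care required is bookkeeping, and in particular noticing that the hypothesis $\delta^0(D) \geq 2\tau n$ is used exactly twice: once to place $A_1$ into the expansion window so that the robust outexpansion can be invoked immediately, and once more to force $y$ into the BFS at the very last step.
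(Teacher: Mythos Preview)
The paper does not actually prove this lemma; it is quoted verbatim from \cite{Kuhn2015} and used as a black box, so there is no in-paper proof to compare against. Your BFS expansion argument is the standard one and is essentially correct: seed with $|A_1| \geq \delta^0(D) \geq 2\tau n$, use robust outexpansion to grow by $\nu n$ per step while $|A_i|$ sits in the window $[\tau n,(1-\tau)n]$, and finish by intersecting $A_{i^*}$ with $N_D^-(y)$. The only quibble is a harmless off-by-one in your step count (a careful telescoping gives $i^* \leq 2 + (1-3\tau)/\nu$ rather than $1 + (1-3\tau)/\nu$), but the final bound $i^*+1 \leq 1/\nu$ still follows from $\nu \ll \tau$.
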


We are now ready to prove \cref{HCD_thm2}. The main idea of the proof is to randomly split the multidigraph $D$ into $r$ simple digraphs $D_1, \dots, D_r$. We then cover most edges in $D_2, \dots, D_r$ with Hamilton cycles using \cref{cor:almost_reg_HCD}. Using a small number of edges of $D_1$, we then find Hamilton cycles covering the remaining edges of $D_2, \dots, D_r$. Finally, we show that the remaining subgraph of $D_1$ has a Hamilton decomposition using \cref{thm:regular_HCD}. 

\begin{proof}[Proof of \cref{HCD_thm2}]
    Let $\alpha' \coloneqq s/n \geq \alpha$.
    Let $\tD$ be the underlying simple digraph corresponding to $D$. For each $e \in E(\tD)$, we denote by $m_e$ the multiplicity with which $e$ appears in $D$. For each $e \in E(\tD)$ independently, we let $X_e$ be a subset of $[r]$ of size $m_e$ chosen uniformly at random. For each $i \in [r]$, we let $D_i$ be the digraph with $V(D_i) = V(D)$ and $E(D_i) = \{e \in E(\tD) \colon i \in X_e\}$. Note that $D_i$ is a simple digraph for each $i \in [r]$ and that $\dot\bigcup_{i \in [r]} D_i = D$ as a multidigraph. 

    Let $\xi$ and $\delta$ be new constants with $1/n \ll \xi \ll \delta \ll \nu$. Let $u \in V(\tD)$ and $i \in [r]$. We have 
    \[
        \mathbb{E}[d_{D_i}^+(u)] = \sum_{\substack{v \in V(D)\\ e = (u,v) \in E(\tD)}} \binom{r-1}{m_e-1}/\binom{r}{m_e} = \sum_{\substack{v \in V(D)\\ e = (u,v) \in E(\tD)}} m_e/r = d_D^+(u)/r = s/r.
    \]
    And similarly, $\mathbb{E}[d_{D_i}^-(u)] = s/r$. Thus by the Chernoff bound, we have 
    \[
        \mathbb{P}[d_{D_i}^+(v) \neq (1 \pm \xi)s/r], \:\mathbb{P}[d_{D_i}^-(v) \neq (1 \pm \xi)s/r] \leq \exp(-\Omega(n)).
    \]
    Note that there is a coupling of $D_i$ and the digraph $\tD_{1/r}$ obtained from $\tD$ by taking each edge independently with probability $1/r$ such that $\tD_{1/r} \subseteq D_i$.
    Hence, since $\tD$ is a robust $(\nu, \tau)$-outexpander, we have by \cref{lem:still_robust}, with probability at least $1-\exp(\nu^3 n^2)$, $D_i$ is a robust $(\nu /2r, \tau)$-outexpander. 
    Hence by a union bound, we have that with probability $1-o(1)$ for every $i \in [r]$,
    \begin{enumerate}[label = \upshape{(\roman*)}]
        \item $d_{D_i}^+(v) = (1 \pm \xi)s/r$, $d_{D_i}^-(v) = (1 \pm \xi)s/r$ for all $v \in V(D)$ and
        \item $D_i$ is a robust $(\nu /2r, \tau)$-outexpander.
    \end{enumerate}
    Fix such a choice of $(D_1, \dots, D_r)$.
    Note that $(\alpha' / r - \xi \alpha' /r)n = (1- \xi)s/r \leq (1+\xi)s/r = (\alpha' / r +  \xi \alpha' /r)n$. Hence by \cref{cor:almost_reg_HCD}, for each $i \in [r]$, there is a set $\cH_i$ of at least $(\alpha'/r - \delta)n$ edge-disjoint Hamilton cycles in $D_i$. For each $i \in [r]$, let $D_i' = D_i - \bigcup_{H \in \cH_i} E(H)$ and note that $\Delta^{\pm}(D_i') \coloneqq \max_{\circ \in \{+,-\}, v \in V(D_i')} d_{D_i'}^\circ(v) \leq 2 \delta n -1$. 
    
    Let $D' = D_1 \cup \bigcup_{2 \leq i \leq r} D_i'$. Observe that it suffices to show that $D'$ has a Hamilton decomposition. Note that $D'$ is $s'$-regular for some $s' \geq (1-2\xi)\alpha' n/r$ and since $D_1$ is a robust $(\nu /2r, \tau)$-outexpander, $D'$ is also a robust $(\nu /2r, \tau)$-outexpander.

    We now show that, for each $i \in [r]$, there exists a decomposition of $D_i'$ into a set $\cM_i$ of at most $16 \delta^{1/2} n$ matchings each of size at most $\delta^{1/2}n$. Indeed, observe that, for each $i \in [r]$, by decomposing $D_i'$ into two oriented graphs\footnote{An \emph{oriented graph} is a simple digraph such that for any vertices $x$ and $y$ at most one of $(x,y)$ and $(y,x)$ is an edge.} and then applying Vizing's theorem to the underlying graphs, we obtain a decomposition of the edges of $D_i'$ into a set of at most $8\delta n$ matchings. From this the desired set of matchings is obtained by splitting each matching of size larger than $\delta^{1/2}n$ into $2 / \delta^{1/2}$ matchings of as equal size as possible.

    Let $\cM = \bigcup_{2 \leq i \leq r} \cM_i = \{M_1, \dots, M_t\}$, where $t \leq 16r\delta^{1/2}n \leq \delta^{1/4}n$. We construct a set of edge-disjoint paths $\cP = \{P_1, \dots, P_t\}$ in $D'$ such that $M_i \subseteq E(P_i)$ and $|V(P_i)| \leq \delta^{1/4}n$ for each $i \in [t]$ as follows. 
    Let $i \in [t]$ and suppose that $P_1, \dots, P_{i-1}$ have already been constructed. Let $M_i = \{(u_j, v_j) \colon j \in [m]\}$ where $m \leq \delta^{1/2}n$ and let $D^* = D' - \bigcup_{1 \leq j \leq i-1}E(P_j) - \bigcup_{i+1 \leq j \leq t} M_i$. 
In order to show that $D^*$ contains a path $P_i$ such that $M_i \subseteq E(P_i)$ and $|V(P_i)| \leq \delta^{1/4}n$, we show by induction on $j \in [m]$ that there exists a path $Q_j$ from $u_1$ to $v_j$ in $D^* - \bigcup_{j+1 \leq j' \leq m} \{u_j', v_j'\}$ of length at most $2 j \nu^{-2}$ such that $Q_j$ contains the edges $(u_{j'}, v_{j'})$ for all $j' \in [j]$ (then $Q_m$ is the desired path $P_i$). 
    To that end, let $j \in [m]$ and suppose that $Q_{j-1}$ is a path from $u_1$ to $v_{j-1}$ in $D^* - \bigcup_{j \leq j' \leq m} \{u_j', v_j'\}$ of length at most $2 (j-1) \nu^{-2}$ such that $Q_{j-1}$ contains the edges $(u_{j'}, v_{j'})$ for all $j' \in [j-1]$. Let $D^{**} = D^* - \{v_j\} - \bigcup_{j+1 \leq j' \leq m} \{u_{j'}, v_{j'}\} - (V(Q_{j-1}) \setminus \{v_{j-1}\})$.
    Observe that $D^{**}$ is obtained from $D'$ by deleting at most $\delta^{1/4}n$ inedges and at most $\delta^{1/4}n$ outedges at every vertex and then deleting at most $2 \delta^{1/2}n + 2(j-1)\nu^{-2} \leq \delta^{1/4}n$ vertices. Hence by \cref{lem:delete}, $D^{**}$ is a robust $(\nu/2r - 3 \delta^{1/4}, 2\tau)$-outexpander. Since $\delta^0(D^{**}) \geq 4\tau n$, by \cref{lem:connect}, we have that there exists a path $P^*$ in $D^{**}$ from $v_{j-1}$ to $u_j$ of length at most $\nu^{-2}$. Prepending $Q_{j-1}$ and appending the edge $(u_j, v_j)$ to $P^*$ gives the desired path $Q_j$.

    We now construct a set $\cH = \{H_1, \dots, H_t\}$ of edge-disjoint Hamilton cycles in $D'$ such that, for each $i \in [t]$, $E(P_i) \subseteq E(H_i)$. Let $i \in [t]$ and suppose that $H_1, \dots, H_{i-1}$ have already been constructed. Let $x$ and $y$ be the first and last vertex of $P_i$, respectively. Let $D^\diamond$ be the digraph obtained from $D'$ by deleting the edges in $\bigcup_{1\leq j \leq i-1} E(H_j) \cup \bigcup_{i+1 \leq j \leq t} E(P_j)$ and deleting all vertices of $P_i$ except $x$ and $y$. Observe that $D^\diamond$ is obtained from $D'$ by deleting at most $\delta^{1/4}n$ inedges and at most $\delta^{1/4}n$ outedges at every vertex and then deleting at most $\delta^{1/4}n$ vertices. Hence by \cref{lem:delete}, $D^\diamond$ is a robust $(\nu/2r - 2\delta^{1/4}, 2 \tau)$-outexpander. Since $\delta^0(D^\diamond) \geq \alpha' n /2r$, by \cref{cor:Hamilton_connected}, $D^\diamond$ contains a Hamilton path from $y$ to $x$ which together with $P_i$ forms the desired Hamilton cycle $H_i$ in $D'$. 

    Let $D'' = D' - \bigcup_{H \in \cH} E(H)$. It now suffices to show that $D''$ has a Hamilton decomposition. Note that $D''$ is a simple digraph as $D'' \subseteq D_1$. Since $D''$ is obtained from $D'$ by deleting at most~$\delta^{1/4} n$ inedges and $\delta^{1/4} n$ outedges at every vertex, we have that $D''$ is a robust $(\nu/2r - 2\delta^{1/4}, 2\tau)$-outexpander.
    Moreover, since $D'$ is $s'$-regular for some $s' \geq (1-2\xi)\alpha' n/r$, $D''$ is $s''$-regular for some $s'' \geq \alpha' n/2r$. Finally, by \cref{thm:regular_HCD}, $D''$ has a Hamilton decomposition.
\end{proof}

\section*{Acknowledgements} The author thanks Daniela K\"uhn and Deryk Osthus for helpful discussions.

\end{document}